\theoremstyle{plain}
\newtheorem{theorem}{Theorem}[section]
\newtheorem{example}[theorem]{Example}
\newtheorem{corollary}[theorem]{Corollary}
\newcommand{\Q}{\mathbb{Q}}
\newcommand{\N}{\mathbb{N}}
\theoremstyle{remark}
\newtheorem*{remark}{Remark}
\begin{document}

\title{Generalized Irreducible Divisor Graphs}
        \date{\today}

\author{Christopher Park Mooney}
\address{Reinhart Center \\ Viterbo University \\ 900 Viterbo Drive \\ La Crosse, WI 54601}
\email{cpmooney@viterbo.edu}

\keywords{factorization, commutative rings, zero-divisor graphs, irreducible divisor graphs}

\begin{abstract}
In 1988, I. Beck introduced the notion of a zero-divisor graph of a commutative rings with $1$.  There have been several generalizations in recent years.  In particular, in 2007 J. Coykendall and J. Maney developed the irreducible divisor graph.  Much work has been done on generalized factorization, especially $\tau$-factorization.  The goal of this paper is to synthesize the notions of $\tau$-factorization and irreducible divisor graphs in domains.  We will define a $\tau$-irreducible divisor graph for non-zero non-unit elements of a domain.  We show that by studying $\tau$-irreducible divisor graphs, we find equivalent characterizations of several finite $\tau$-factorization properties.
\\
\vspace{.1in}\noindent \textbf{2010 AMS Subject Classification:} 13A05, 13E99, 13F15, 5C25
\end{abstract}
\maketitle
\section{Introduction}  
\indent In this article, $D$ will denote an integral domain.  We will always assume that all rings have an identity which is not zero.  We will use $G=(V,E)$ to denote a graph with $V$ the set of vertices and $E$ the set of edges.  In 1988, I. Beck in \cite{Beck}, introduced for a commutative ring $R$, the notion of a zero-divisor graph $\Gamma(R)$.  The vertices of $\Gamma(R)$ are the set of zero-divisors, and there is an edge between $a,b \in Z(R)$ if $ab=0$.  This has been studied and developed by many authors including, but not limited to D.D. Anderson, D.F. Anderson, A. Frazier, A. Lauve, P.S. Livingston, and M. Naseer in \cite{andersonzdg, davidanderson, Livingston}.  
\\
\indent There have been several generalizations and extensions of this concept.  In this paper, we focus on the notion of an irreducible divisor graph first formulated by J. Coykendall and J. Maney in \cite{Coykendall}.  Instead of looking exclusively at divisors of zero in a ring, they restrict to a domain $D$ and choose any non-zero, non-unit $x\in D$ and study the relationships between the irreducible divisors of $x$.  In \cite{Axtellidgzd, Coykendall}, M. Axtell, N. Baeth, J. Coykendall, J. Maney, and J. Stickles present several nice results about factorization properties of domains based on their associated irreducible divisor graphs.  M. Axtell and J. Stickles have also studied irreducible divisor graphs in commutative rings with zero-divisors in \cite{Axtellidgd}.
\\
\indent Unique factorization domains are well known and widely studied.  Lesser known are other finite factorization properties that domains might possess which are weaker than UFDs.  These come in the form of half factorization domains or half factorial domains (HFDs), finite factorization domains (FFDs) and bounded factorization domains (BFDs).  See \cite{anderson90} for more information on the developments in the theory of factorization in integral domains.  More recently, these concepts have been further generalized by way of $\tau$-factorization in several papers, especially by D.D. Anderson and A. Frazier in \cite{Frazier} as well as the author in \cite{Mooney, Mooney2}.  In this paper, we seek to take the notion of $\tau$-factorization and apply it to irreducible divisor graphs.  We will find that many equivalent characterizations of $\tau$-finite factorization properties given in the aforementioned papers can be given by studying $\tau$-irreducible divisor graphs.
\\
\indent Section Two gives the necessary preliminary background information and definitions from the study of irreducible and zero-divisor graphs as well as $\tau$-factorization.  In Section Three, we define the $\tau$-irreducible divisor graph of a domain $D$ with a fixed relation $\tau$.  We give a few examples of $\tau$-irreducible divisor graphs, especially in comparison with the irreducible divisor graphs of \cite{Axtellidgd, Coykendall}.  In Section Four, we prove several theorems illustrating how $\tau$-irreducible divisor graphs give us another way to characterize various $\tau$-finite factorization properties domains may possess as defined in \cite{Frazier}.  

\section{Preliminary Definitions}
\subsection{Irreducible Divisor Graph Definitions}\ 
\\
\indent We begin with some definitions from J. Coykendall and J. Maney \cite{Coykendall}.  Let $Irr(D)$ be the set of all irreducible elements in a domain $D$.  We will let $\overline{Irr}(D)$ be a (pre-chosen) set of coset representatives of the collection $\{a U(D) \mid a \in Irr(D)\}$.  Let $x\in D^{\#}$ have a factorization into irreducibles.  The irreducible divisor graph of $x \in D^{\#}$, will be the graph $G(x)=(V,E)$ where $V=\{a\in \overline{Irr}(D) \mid a \mid x\}$, i.e. the set of irreducible divisors of $x$ up to associate.  Given $a_1, a_2 \in \overline{Irr}(D)$, $a_1a_2 \in E$ if and only if $a_1a_2 \mid x$.  Furthermore, $n-1$ loops will be attached to $a$ if $a^n \mid x$.  If arbitrarily large powers of $a$ divide $x$, we allow an infinite number of loops.  They define the \emph{reduced irreducible divisor graph} of $x$ to be the subgraph of $G(x)$ which is formed by deleting all the loops and denote it as $\overline{G}(x)$.
\\

\indent A \emph{clique} will refer to a simple (no loops or multiple edges), complete (all vertices are pairwise adjacent) graph.  A clique on $n \in \N$ vertices will be denoted $K_n$.  We will call a graph $G$ a \emph{pseudoclique} if $G$ is a complete graph having some number of loops (possibly zero).  This means a clique is still considered a pseudoclique.
\\

\indent Let $G$ be a graph, possibly with loops, and let $a\in V(G)$.  We have two ways of counting the degree of this vertex.  We define \emph{deg}$(a):= \left|\{a_1 \in V(G) \mid a_1 \neq a, a_1a \in E(G)\}\right|$, i.e. the number of distinct vertices adjacent to $a$.  Suppose a vertex $a$ has $n$ loops.  We define \emph{degl}$(a):=n+deg(a)$, the sum of the degree and the number of loops.  Given $a,b \in V(G)$, we define $d(a,b)$ to be the shortest path between $a$ and $b$.  If no such path exists, i.e. $a$ and $b$ are in disconnected components of $G$, or the shortest path is infinite, then we say $d(a,b)= \infty$.  We define Diam$(G):=$sup$(\{d(a,b) \mid a,b \in V(G) \}).$  

\subsection{$\tau$-Factorization Definitions}\ 
\\
\indent Let $D$ be a domain.   Let $D^*=D-\{0\}$, let $U(D)$ be the set of units of $D$, and let $D^{\#}=D^*-U(D)$ be the non-zero, non-units of $D$.  We will say $a$ and $b$ are associates if $a=\lambda b$ for some $\lambda \in U(D)$, and denote this $a \sim b$.  Let $\tau$ be a relation on $D^{\#}$, that is, $\tau \subseteq D^{\#} \times D^{\#}$.  We will always assume further that $\tau$ is symmetric.  Let $a$ be a non-unit.  A factorization of the form $a=\lambda a_1 \cdots a_n$ is said to be a \emph{$\tau$-factorization} if $a_i \in D^{\#}$, $\lambda \in U(D)$ and $a_i \tau a_j$ for all $i\neq j$.  If $n=1$, then this is said to be a \emph{trivial $\tau$-factorization}.  Each $a_i$ is said to be a \emph{$\tau$-factor}, or that $a_i$ \emph{$\tau$-divides} $a$, written $a_i \mid_\tau a$.
\\
\indent We say that $\tau$ is \emph{multiplicative} (resp. \emph{divisive}) if for $a,b,c \in D^{\#}$ (resp. $a,b,b' \in D^{\#}$), $a\tau b$ and $a\tau c$ imply $a\tau bc$ (resp. $a\tau b$ and $b'\mid b$ imply $a \tau b'$).  We say $\tau$ is \emph{associate preserving} if for $a,b,b'\in D^{\#}$ with $b\sim b'$, $a\tau b$ implies $a\tau b'$.  We define a \emph{$\tau$-refinement} of a $\tau$-factorization $\lambda a_1 \cdots a_n$ to be a factorization of the form 
$$(\lambda \lambda_1 \cdots \lambda_n) \cdot b_{11}\cdots b_{1m_1}\cdot b_{21}\cdots b_{2m_2} \cdots b_{n1} \cdots b_{nm_n}$$
where $a_i=\lambda_i b_{i1}\cdots b_{im_i}$ is a $\tau$-factorization for each $i$.  This is slightly different from the original definition in \cite{Frazier} where no unit factor was allowed.  One can see they are equivalent when $\tau$ is associate preserving.  We then say that $\tau$ is \emph{refinable} if every $\tau$-refinement of a $\tau$-factorization is a $\tau$-factorization.  We say $\tau$ is \emph{combinable} if whenever $\lambda a_1 \cdots a_n$ is a $\tau$-factorization, then so is each $\lambda a_1 \cdots a_{i-1}(a_ia_{i+1})a_{i+2}\cdots a_n$.  
\\
\indent We pause briefly to give some examples of particular relations $\tau$.

\begin{example}Let $D$ be a domain and let $\tau=D^{\#}\times D^{\#}$. \end{example}
\indent This yields the usual factorizations in $D$ and $\mid_{\tau}$ is the same as the usual divides. Moreover, $\tau$ is multiplicative and divisive (hence associate preserving).

\begin{example}Let $D$ be a domain and let $\tau=\emptyset$. \end{example}
\indent For every $a\in D^{\#}$, there is only the trivial factorization.  Furthermore, all $\tau$-divisors of $a$ are associate to $a$.  If $b\mid{_\tau} a$, then the factorization is forced to be of the form $a=\lambda b=\lambda(\lambda^{-1}(a))$ for $\lambda \in U(D)$. That is, $a\sim b$.  Again $\tau$ is both multiplicative and divisive (vacuously).

\begin{example}Let $D$ be a domain and let $S$ be a non-empty subset of $D^{\#}$.  Let $\tau=S\times S$.  Define $a\tau b \Leftrightarrow a,b\in S$. \end{example}
In this case, $\tau$ is multiplicative (resp. divisive) if and only if $S$ is multiplicatively closed (resp. closed under non-unit factors).  A non-trivial $\tau$-factorization is (up to unit factors) a factorization into elements from $S$.  Some examples of nice sets $S$ might be the set of primes or irreducibles, then a $\tau$-factorization is a prime decomposition or an atomic factorization respectively.

\begin{example} Let $D$ be a domain and let $a \tau b$ if and only if $(a,b)=D$\end{example}
\indent In this case we get the comaximal factorizations studied by S. McAdam and R. Swan in \cite{Mcadam}.  More generally, as in J. Juett in \cite{Juettcomax}, we could let $\star$ be a star-operation on $D$ and define $a\tau b \Leftrightarrow (a,b)^{\star}=D$, that is $a$ and $b$ are $\star$-coprime or $\star$-comaximal. 

\indent Let $a\in D^{\#}$.  As in \cite{Frazier}, we will say $a$ is \emph{$\tau$-irreducible} or a \emph{$\tau$-atom} if factorizations of the form $a=\lambda (\lambda^{-1} a)$ are the only $\tau$-factorizations of $a$.  Then $D$ is said to be \emph{$\tau$-atomic} if every $a\in D^{\#}$ has a $\tau$-factorization $a=\lambda a_1\cdots a_n$ with $a_i$ being $\tau$-atomic for all $1\leq i \leq n$.  We will call such a factorization a \emph{$\tau$-atomic-factorization}.  We say $D$ satisfies \emph{$\tau$-ascending chain condition on principal ideals ($\tau$-ACCP)} if for every chain $(a_0) \subseteq (a_1) \subseteq \cdots \subseteq (a_i) \subseteq \cdots$ with $a_{i+1} \mid_{\tau} a_i$, there exists an $N\in \N$ such that $(a_i)=(a_N)$ for all $i>N$.
\\
\indent A domain $D$ is said to be a \emph{$\tau$-unique factorization domain ($\tau$-UFD)} if (1) $D$ is $\tau$-atomic and (2) for every $a \in D^{\#}$ any two $\tau$-atomic factorizations $a=\lambda_1 a_1 \cdots a_n = \lambda_2 b_1 \cdots b_m$ have $m=n$ and there is a rearrangement so that $a_i$ and $b_i$ are associate for each $1 \leq i \leq n$.  A domain $D$ is said to be a \emph{$\tau$-half factorization domain ($\tau$-HFD)} if (1) $D$ is $\tau$-atomic and (2) for every $a \in D^{\#}$ any two $\tau$-atomic-factorizations have the same length.  A domain $D$ is said to be a \emph{$\tau$-finite factorization domain ($\tau$-FFD)} if for every $a \in D^{\#}$ there are only a finite number of $\tau$-factorizations up to rearrangement and associate.  A domain $D$ is said to be a \emph{$\tau$-weak finite factorization domain ($\tau$-WFFD)} if for every $a \in D^{\#}$, there are only finitely many $b\in D$ such that $b$ is a $\tau$-divisor of $a$ up to associate.  A domain $D$ is said to be a \emph{$\tau$-irreducible divisor finite domain (idf-domain)} if for every $a \in D^{\#}$, there are only finitely many $\tau$-atomic $\tau$-divisors of $a$ up to associate.  A domain $D$ is said to be a \emph{$\tau$-bounded factorization domain ($\tau$-BFD)} if for every $a \in D^{\#}$, there exists a natural number $N(a)$ such that for any $\tau$-factorization $a=\lambda a_1 \cdots a_n$, $n \leq N(a)$.
\\
\indent We have the following set of relationships between the above finite factorization properties from \cite{Frazier}, where $\nabla$ indicates the relationship requires $\tau$ to be refinable and associate preserving.
$$\xymatrix{
            &        \tau \text{-HFD} \ar@{=>}^{\nabla}[dr]     &             &                  &                 \\ 
\tau\text{-UFD} \ar@{=>}[ur] \ar@{=>}^{\nabla}[dr]  &  & \tau\text{-BFD} \ar@{=>}[r]^{\nabla}& \tau\text{-ACCP} \ar@{=>}^{\nabla}[r]& \tau\text{-atomic}\\
 & \tau\text{-FFD} \ar@{=>}[ur]\ar@{=>}[r] &\tau\text{-WFFD}\ar@{=>}[dl] & &  \\
 &\tau\text{-atomic }\tau\text{-idf domain}\ar@{=>}^{\nabla}[u]& & & }$$
 
\section{The $\tau$-Irreducible Divisor Graph and Examples}
Let $D$ be a domain and let $\tau$ be a symmetric and associate preserving relation on $D^{\#}$.  Let \emph{$Irr_\tau(D)$} be the collection of $\tau$-irreducible elements.  We will let $\overline{Irr}_\tau (D)$ be fixed, pre-chosen coset representatives of the cosets $\{a U(D) \mid a \in Irr_\tau(D)\}$.  Given an element $a\in D^{\#}$ with a $\tau$-atomic factorization, we define the \emph{$\tau$-irreducible divisor graph} of $x$ to be $G_\tau (x)=(V,E)$ with $V=\{a\in \overline{Irr}_\tau(D) \mid a \mid_{\tau} x\}$, and given $a_1, a_2 \in \overline{Irr}_{\tau}(D)$, $a_1a_2 \in E$ if and only if there is a $\tau$-factorization of the form $x=\lambda a_1 a_2 \cdots a_n$.  Furthermore, $n-1$ loops will be attached to the vertex corresponding to $a$ if there is a $\tau$-atomic factorization of the form $x=\lambda a \cdots a a_1 \cdots a_n$ where $a$ occurs $n$ times.  Again, if this occurs for arbitrarily large powers of $a$, we allow the possibility of an infinite number of loops.  We then define the \emph{reduced $\tau$-irreducible divisor graph} of $x$ to be the subgraph of $G_\tau(x)$ formed by removing all loops, and denote it $\overline{G_\tau(x)}$.  If there are no $\tau$-atoms in $D$, then $D$ is said to be a \emph{$\tau$-antimatter domain}.  In this case, we define $G_\tau (x)=\overline{G_\tau (x)}=\emptyset$ for all $x \in D^{\#}$.
\\
\begin{example}\label{ex: atom} Let $D$ be a domain.  Suppose $\tau = \emptyset$.\end{example}
By letting $\tau=\emptyset$, we have eliminated all non-trivial $\tau$-factorizations.  This has the effect of making every non-zero, non-unit a $\tau$-atom.  This means for every $x \in D^{\#}$, $G_\tau(x)=(\{x\}, \emptyset).$  The $\tau$-irreducible divisor graph of $x$ consists of the single vertex, $x$ itself (or whichever associate of $x$ chosen initially in $\overline{Irr_\tau(D)}$).

\begin{example}Let $D$ be a domain.  Suppose $\tau = D^{\#} \times D^{\#}$. \end{example}
In this case, we are looking at the usual factorizations in $D$.  This means $a\mid b$ if and only if $a \mid_\tau b$.  Every $\tau$-factorization is a usual factorization and conversely.  Moreover, $x\in D^{\#}$ is $\tau$-atomic if and only if $x$ is atomic.  This results in the irreducible divisor graphs being identical.  Hence, we have $G_\tau(x)=G(x)$ and  $\overline{G_\tau(x)}= \overline{G}(x)$ as defined in Coykendall and Maney \cite{Coykendall}.

\begin{example} (Inspired by \cite[Example 2.4]{Coykendall}) Let $D=\Q[x^2, x^3]$ and let $f(x)=x^8-x^9$ and consider the relation $\tau$ defined by $g(x) \tau g'(x)$ if and only if $deg(g(x))=deg(g'(x))$. \end{example}
The irreducible factorizations of $f(x)$ are
$$f(x)=x^8-x^9=x^2\cdot x^2 \cdot x^2 \cdot (x^2-x^3) = (x^3-x^4)\cdot x^2 \cdot x^3=(x^2 - x^3) \cdot x^3 \cdot x^3.$$
It is clear that only the last factorization above is a $\tau$-factorization.  It is the only factorization in which all of the factors have the same degree.  Every irreducible is certainly $\tau$-irreducible, but the question is: are there any additional $\tau$-atomic elements which $\tau$-divide $f(x)$?  For instance, $g(x)=x^5$ is $\tau$-atomic since the only non-trivial factorization up to associates and rearrangement is $g(x)=x^2 \cdot x^3$ is thrown out due to the factors having different degrees.  Any $\tau$-factorization of $f(x)$ must have $\tau$-factors of the same degree.  This amounts to finding a proper partition of $deg(f(x))=9$ into parts of equal size.  This is done only by nine degree $1$ parts, or three degree $3$ parts. Since all non-units in $D$ have degree at least $2$, the former is not possible in this ring.  This leaves only one possible $\tau$-atomic factorization up to associates and rearrangement:
$$f(x)=(x^2 - x^3) \cdot x^3 \cdot x^3.$$
\indent We show both $G(f(x))$ and $G_\tau(f(x))$ below in Figure 1 for the sake of comparison.

 \begin{figure}[H]
	 \centering
		 \includegraphics[scale=.7]{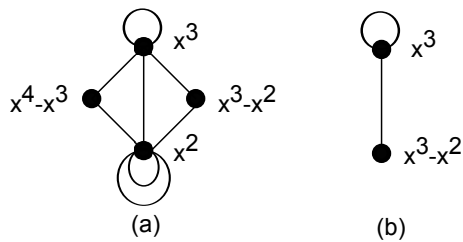}
	 \caption{(a) $G(f(x))$, (b) $G_\tau (f(x))$}
	 \label{fig:Irreducible divisor graphs}
 \end{figure}

\indent This yields the following corresponding reduced irreducible divisor graphs $\overline{G}(f(x))$ and $\overline{G_\tau (f(x))}$ in Figure 2.

 \begin{figure}[H]
	 \centering
		 \includegraphics[scale=.7]{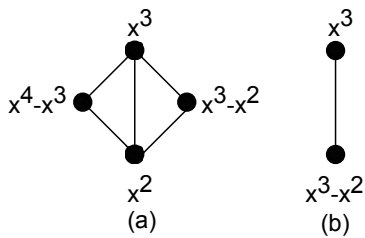}
	 \caption{(a) $\overline{G}(f(x))$, (b) $\overline{G_\tau (f(x))}$}
	 \label{fig:Reduced Irreducible divisor graphs}
 \end{figure}

\begin{remark}
This above partition argument shows that any polynomial of prime degree $p$ is a $\tau$-atom.  The only way to non-trivially partition a prime integer $p$ into equal parts is $p$ parts of degree $1$ each.  But all non-unit elements have degree at least $2$.  Hence there can only be the trivial factorization.  Thus we know the $\tau$-irreducible divisor graph of any polynomial of prime degree, $p(x)$ consists of the single vertex, $p(x)$ (or possibly whichever associate of $p(x)$ was taken in $\overline{Irr_\tau(D)}$).  It is perhaps worth noting that the converse is false as conveniently demonstrated above by $x^4-x^3$ being irreducible and hence $\tau$-irreducible, despite having composite degree.
\end{remark}
\section{The $\tau$-Irreducible Divisor Graph and $\tau$-Finite Factorization Properties}
We first observe that the $\tau$-irreducible divisor graph yields a new equivalent characterization of a $\tau$-irreducible element of \cite{Frazier}.  This formalizes something we observed in Example \ref{ex: atom}.
\begin{theorem}\label{thm: atom} Let $D$ be a domain and $\tau$ a symmetric, associate preserving relation on $D^{\#}$.  If $D$ is $\tau$-atomic, then a non-unit $x$ is $\tau$-irreducible if and only if $G_\tau(x) \cong K_1$, the complete graph on a single vertex which is some associate of $x$.
\end{theorem}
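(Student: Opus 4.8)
The plan is to prove the two implications separately, drawing on $\tau$-atomicity only for the reverse direction. The organizing idea is that since $D$ is $\tau$-atomic, every $\tau$-divisor of $x$ sits inside a genuine $\tau$-atomic factorization, so I can read the vertices, edges, and loops of $G_\tau(x)$ directly off such a factorization and never need $\tau$ to be refinable. I would first record the small observation that $\tau$-irreducibility is an associate invariant: if $c$ is a $\tau$-atom and $x \sim c$, then absorbing the connecting unit turns any non-trivial $\tau$-factorization of $x$ into one of $c$, so $x$ is itself a $\tau$-atom.

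For the forward implication, suppose $x$ is $\tau$-irreducible, so that its only $\tau$-factorizations are the trivial ones $x = \lambda(\lambda^{-1}x)$ with $\lambda \in U(D)$. I would first pin down the vertices: any $a$ with $a \mid_\tau x$ appears as a $\tau$-factor of $x$, but the only factor occurring in a trivial $\tau$-factorization is $\lambda^{-1}x \sim x$, and since $x$ is a $\tau$-atom its representative in $\overline{Irr}_\tau(D)$ is the unique vertex. An edge $a_1 a_2$ would require a $\tau$-factorization $x = \lambda a_1 a_2 \cdots a_n$ with at least two factors, and a loop would require a $\tau$-atomic factorization with a repeated atom; both are ruled out by $\tau$-irreducibility. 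Hence $G_\tau(x)$ is a single vertex, an associate of $x$, with no edges and no loops, that is, $G_\tau(x) \cong K_1$.

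For the reverse implication, suppose $G_\tau(x) \cong K_1$, and use $\tau$-atomicity to fix a $\tau$-atomic factorization $x = \mu c_1 \cdots c_m$ with each $c_i$ a $\tau$-atom and $\mu \in U(D)$. Each $c_i$ satisfies $c_i \mid_\tau x$, so each contributes its representative to the vertex set; as there is only one vertex, the $c_i$ are pairwise associate, say $c_i \sim a$ for the single vertex $a$. If $m \geq 2$ I would use that $\tau$ is associate preserving to promote this to a loop: from $c_i \tau c_j$ with $c_i \sim a \sim c_j$ one obtains $a \tau a$, so $x = \mu' a^m$ (with $\mu' \in U(D)$ absorbing the units) is again a $\tau$-atomic factorization, now exhibiting $a$ with multiplicity $m \geq 2$ and hence forcing at least $m-1 \geq 1$ loops at $a$. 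This contradicts $G_\tau(x) \cong K_1$, so $m = 1$; then $x = \mu c_1$ is an associate of the $\tau$-atom $c_1$ and is $\tau$-irreducible by the preliminary observation.

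The vertex and edge counts are immediate once the correct factorization is at hand, so I expect the only delicate point to be the loop count in the reverse direction: one must verify that a $\tau$-atomic factorization whose factors are merely associate to $a$, rather than literally equal to $a$, genuinely records loops at the single vertex. This is precisely where the associate-preserving hypothesis does its work — first to deduce $a \tau a$ so that $a^m$ is a legitimate $\tau$-factorization, and then to identify the repeated associate factors with the representative $a$ when counting loops. Notably, no refinability of $\tau$ is needed anywhere, because $\tau$-atomicity supplies a $\tau$-atomic factorization outright rather than obliging me to refine an arbitrary $\tau$-factorization down to atoms.
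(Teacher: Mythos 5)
Your proposal is correct and follows essentially the same route as the paper: the forward direction reads the single vertex off the triviality of all $\tau$-factorizations of a $\tau$-atom, and the reverse direction uses $\tau$-atomicity to produce a $\tau$-atomic factorization whose length must be $1$, since length $\geq 2$ would force either a second vertex or a loop. Your explicit treatment of the loop case --- using the associate-preserving hypothesis to replace factors merely associate to the vertex $a$ by $a$ itself --- is a welcome elaboration of a point the paper's proof leaves implicit, but it is not a different argument.
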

\begin{proof} If $x$ is $\tau$-irreducible, then the only $\tau$-factorizations are of the form $x=\lambda (\lambda^{-1}x)$, hence the only $\tau$-divisors of $x$ are associates of $x$.  Moreover, $x$ is a non-zero, non-unit, so $x^2$ does not divide $x$, hence there are no loops.  Thus, $G_\tau (x)$ is a single isolated vertex generated by whichever associate of $x$ was chosen in $\overline{Irr_\tau(D)}$.  Conversely, suppose $x \in R$ is a non-unit such that $G_\tau(x) \cong K_1$.  We suppose $x$ were not $\tau$-irreducible.  Then there is a non-trivial $\tau$-atomic factorization of the form $x=\lambda a_1 \cdots a_n$ with $n \geq 2$.  This yields $a_1, a_2 \in V(G_\tau(x))$, but there is only one vertex and no loops in $G_\tau (x)$, contradicting the hypothesis and completing the proof.
\end{proof}
\begin{theorem}\label{thm: accp} Let $D$ be a domain and $\tau$ a symmetric, refinable and associate preserving relation on $D^{\#}$.  If $D$ is $\tau$-atomic such that for all $x \in D^{\#}$, and for all $a \in V(G_\tau(x))$, degl$(a) < \infty$, then $D$ satisfies $\tau$-ACCP.
\end{theorem}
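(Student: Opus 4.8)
The plan is to argue by contradiction, extracting from a non‑stabilizing chain a vertex of infinite degl in $G_\tau(a_0)$. Suppose $D$ fails $\tau$-ACCP and fix a chain $(a_0)\subseteq(a_1)\subseteq\cdots$ with $a_{i+1}\mid_\tau a_i$ that never stabilizes. For each $i\ge 0$, using the definition of $\mid_\tau$, I would record a witnessing $\tau$-factorization $a_i=\lambda_{i+1}\,a_{i+1}\,f^{(i+1)}_1\cdots f^{(i+1)}_{s_{i+1}}$. Since $(a_i)\subseteq(a_{i+1})$ means $a_{i+1}\mid a_i$, the inclusion is strict exactly when $a_{i+1}$ is a proper divisor, i.e. when $s_{i+1}\ge 1$; thus the chain fails to stabilize precisely because $s_{i+1}\ge 1$ for infinitely many $i$. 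Let $i_1$ be the first index with $s_{i_1}\ge 1$.

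Next I would telescope. Substituting the witnessing factorizations into one another and invoking refinability repeatedly, the product $a_0=(\text{unit})\cdot a_k\cdot\prod_{i=1}^{k}\prod_{j} f^{(i)}_j$ is again a $\tau$-factorization of $a_0$; refining $a_k$ and each $f^{(i)}_j$ into $\tau$-atoms (possible since $D$ is $\tau$-atomic) yields a genuine $\tau$-atomic factorization $F_k$ of $a_0$. Its length is at least $1+\#\{\,1\le i\le k:\ s_i\ge 1\,\}$, which tends to $\infty$ as $k\to\infty$. Moreover, because the step‑$i_1$ factors survive every further substitution, a fixed $\tau$-atom $g$ dividing $f^{(i_1)}_1$ occurs in $F_k$ for all $k\ge i_1$; in particular $g\mid_\tau a_0$, so $g\in V(G_\tau(a_0))$.

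Finally I would apply the hypothesis to this fixed $g$. Every atom of $F_k$ distinct from $g$ co‑occurs with $g$ in the $\tau$-factorization $F_k$, hence is adjacent to $g$; and repeated occurrences of any atom in the $\tau$-atomic factorization $F_k$ produce loops on that atom. If $\deg(g)=\infty$ we already contradict degl$(g)<\infty$. Otherwise every atom ever appearing in any $F_k$ lies in the fixed finite set $\{g\}\cup N(g)$, of size $D$ say, while the length of $F_k$ grows without bound; pigeonhole then forces, for each large $k$, an atom of multiplicity at least $(\text{length of }F_k)/D$, and since these ``heavy'' atoms range over a fixed finite set, one fixed atom $h^{*}$ attains arbitrarily large multiplicity in infinitely many $F_k$. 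Such an $h^{*}$ carries arbitrarily many loops in $G_\tau(a_0)$, so degl$(h^{*})=\infty$, contradicting the hypothesis; hence the chain stabilizes and $\tau$-ACCP holds. I expect the main obstacle to be the careful bookkeeping that makes the telescoped product a bona fide $\tau$-atomic factorization (repeated refinability together with $\tau$-atomicity) while guaranteeing that the single atom $g$ persists in every $F_k$, and then the two‑stage pigeonhole that confines all atoms to $g$'s finite neighborhood and isolates one atom of unbounded multiplicity.
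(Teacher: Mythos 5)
Your proposal is correct and follows essentially the same route as the paper's proof: telescope the witnessing $\tau$-factorizations using refinability, refine all factors into $\tau$-atoms using $\tau$-atomicity, obtain $\tau$-atomic factorizations of the bottom element of unbounded length, and then split into the two cases of infinitely many distinct atoms (giving a vertex of infinite degree) versus finitely many (giving, by pigeonhole, an atom of unbounded multiplicity and hence infinitely many loops). The paper phrases the dichotomy directly on the set $\{a_{ij}\}$ of all atoms produced rather than on the neighborhood of a fixed persistent atom $g$, but the substance is identical.
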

\begin{proof} Suppose $D$ did not satisfy $\tau$-ACCP.  Then there exists a chain of principal ideals $(x_1) \subsetneq (x_2) \subsetneq (x_3) \subsetneq \cdots$ such that $x_{i+1} \mid_\tau x_i$.  Say 

\begin{equation}\label{1} x_i = \lambda_i x_{i+1}\cdot a_{i1} \cdots a_{in_i}\end{equation}

is a $\tau$-factorization for each $i$.  Because $D$ is $\tau$-atomic and $\tau$ is refinable and associate preserving, we may replace each $a_{ij}$ with a $\tau$-atomic factorization.  This allows us to assume each factor in Equation \eqref{1} is a $\tau$-atom.  Since $\tau$ is associate preserving, we may assume further that each $a_{ij} \in \overline{Irr}_\tau(D)$.  Because $\tau$ is refinable, 

\begin{equation}\label{2}x_1 = \lambda_1 x_{2}\cdot a_{11} \cdots a_{1n_1}=(\lambda_1 \lambda_2) x_{3}\cdot a_{21} \cdots a_{2n_2}\cdot a_{11} \cdots a_{1n_1}= \cdots\end{equation}

are all $\tau$-factorizations with $a_{ij}$ $\tau$-atomic.  Because $x_i \subsetneq x_{i+1}$, we must have in Equation \eqref{1} that $n_i \geq 1$ or else $x_i \sim x_{i+1}$.  This means the factorizations in each iteration of Equation \eqref{2} increase in length.  If $\{a_{ij}\}$ is infinite, then $a_{11}$ has an infinite number of adjacent vertices in $G_\tau(x_1)$, i.e $degl(a_{11}) \geq deg(a_{11})=\infty$.  Otherwise, if $\{a_{ij}\}$ is finite, then one of the $a_{i_0j_0}$ for some $i_0$ and $j_0$ occurs an infinite number of times.  Hence degl$(a_{i_0j_0})=\infty$ in $G_\tau(x_1)$ since arbitrarily high powers of $a_{i_0j_0}$ $\tau$-divide $x_1$.  This is a contradiction and $D$ must satisfy $\tau$-ACCP as desired.
\end{proof}
\begin{theorem}\label{thm: ufd}Let $D$ be a domain and $\tau$ a symmetric and associate preserving relation on $D^{\#}$.  If $D$ is $\tau$-atomic, then the following are equivalent.
\\
(1) $D$ is a $\tau$-UFD.
\\
(2) $G_\tau(x)$ is a pseudoclique for every $x\in D^{\#}$.
\\
(3) $\overline{G_\tau(x)}$ is a clique for every $x\in D^{\#}$.
\\
(4) $G_\tau (x)$ is connected and Diam$(G_\tau (x))=1$ for every $x\in D^{\#}$.
\\
(5) $\overline{G_\tau(x)}$ is connected and Diam$(\overline{G_\tau(x)})=1$ for every $x\in D^{\#}$.
\\
(6) $G_\tau(x)$ is connected for every $x\in D^{\#}$.
\\
(7) $\overline{G_\tau(x)}$ is connected for every $x\in D^{\#}$.
\end{theorem}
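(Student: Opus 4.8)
The plan is to close the loop by proving $(1)\Rightarrow(2)$, a routine chain of graph-theoretic implications leading from $(2)$ down to the weakest condition $(7)$, and finally the return implication $(7)\Rightarrow(1)$, treating the passage among $(2)$--$(7)$ as the easy part and isolating $(1)\Rightarrow(2)$ and $(7)\Rightarrow(1)$ as the two that carry content. For the routine part I would first observe that a pseudoclique is by definition a complete graph with loops, so deleting loops turns condition $(2)$ into condition $(3)$ and conversely, giving $(2)\Leftrightarrow(3)$. A complete graph is connected with any two distinct vertices at distance $1$, so $(2)\Rightarrow(4)$ and $(3)\Rightarrow(5)$, where the single-vertex case (corresponding to a $\tau$-atom $x$) is handled by Theorem \ref{thm: atom} together with the convention for the diameter of $K_1$. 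The implications $(4)\Rightarrow(6)$ and $(5)\Rightarrow(7)$ are immediate, and since a loop never joins two distinct vertices, $G_\tau(x)$ and $\overline{G_\tau(x)}$ have the same connected components, giving $(6)\Leftrightarrow(7)$. Thus all of $(2)$--$(7)$ follow from $(2)$, and each implies the weakest statement $(7)$.

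For $(1)\Rightarrow(2)$ I would fix $x\in D^{\#}$ and its essentially unique $\tau$-atomic factorization, and argue that the vertices of $G_\tau(x)$ are exactly the $\tau$-atoms occurring in it. Given two distinct vertices $a,b$, uniqueness should force $a$ and $b$ to appear together in a single $\tau$-atomic factorization of $x$, which is precisely an edge $ab\in E$; repeated atoms then account for the loops, so $G_\tau(x)$ is a pseudoclique. The delicate point here is that a vertex is defined through an arbitrary $\tau$-factorization $x=\lambda a d_1\cdots d_s$ whose cofactors $d_i$ need not be $\tau$-atoms, so I must use associate-preservation together with uniqueness to reconcile such a factorization with the canonical $\tau$-atomic one and conclude that $a$ genuinely occurs in it.

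The main obstacle is the direction $(7)\Rightarrow(1)$, which I would prove by contraposition: assuming $D$ is not a $\tau$-UFD, I construct an $x$ with $\overline{G_\tau(x)}$ disconnected. I would select a counterexample $x$ of minimal length, meaning $x$ admits two essentially different $\tau$-atomic factorizations $x=\lambda a_1\cdots a_n=\mu b_1\cdots b_m$ with $n$ the smallest shortest-factorization-length occurring among all non-uniquely factored elements. Since deleting a single factor from a $\tau$-factorization again yields a $\tau$-factorization, an associate coincidence $a_i\sim b_j$ would let me cancel and produce a strictly shorter counterexample $x/a_i$; minimality therefore forces the two atom sets to be disjoint up to associates. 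The crux is then to rule out any edge, and hence any path, joining an $a_i$ to a $b_j$: such an edge would supply a $\tau$-factorization $x=\nu a_i b_j c_1\cdots c_r$, and by cancelling $a_i$ I would try to exhibit $x/a_i$ as a shorter element carrying two essentially different $\tau$-atomic factorizations, one from the remaining $a$'s and one containing the foreign atom $b_j$, contradicting minimality. Making this final step airtight is exactly where the absence of refinability bites, since the cofactors $c_i$ need not be $\tau$-atoms and cannot in general be refined within a $\tau$-factorization; controlling these bridging factorizations so that the cross-atoms really do yield a genuinely distinct $\tau$-atomic factorization of the smaller element is the heart of the argument.
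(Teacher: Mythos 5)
Your overall architecture matches the paper's: prove $(1)\Rightarrow(2)$ from uniqueness of the $\tau$-atomic factorization, dispatch $(2)$--$(7)$ by elementary graph theory (the paper records $(2)\Leftrightarrow(3)$, $(6)\Leftrightarrow(7)$, $(2)\Leftrightarrow(4)$, $(3)\Leftrightarrow(5)$ and $(2)\Rightarrow(6)$), and close the loop with a minimal-counterexample argument in the style of Axtell--Baeth--Stickles. The easy implications and the $(1)\Rightarrow(2)$ step are essentially the paper's proof; the paper is equally brisk about the point you flag there (that a vertex $a$ of $G_\tau(x)$ is only witnessed by a $\tau$-factorization whose cofactors need not be atoms), simply asserting that the $a_i$ are the only $\tau$-irreducible $\tau$-divisors of $x$.

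The genuine gap is in $(7)\Rightarrow(1)$, and it is twofold. First, your contrapositive framing commits you to exhibiting a disconnected $\overline{G_\tau(x)}$, but $V(G_\tau(x))$ may contain $\tau$-atomic divisors of $x$ appearing in neither of your two factorizations; ruling out edges between the $a_i$ and the $b_j$ therefore does not rule out paths between them, so ``no edge, and hence no path'' is a non sequitur. The paper argues in the direct direction: assuming connectivity, it takes the path in $G_\tau(y)$ from $b_1$ to $a_1$ and reduces, without loss of generality, to the case where $a_1$ and $b_1$ are actually adjacent (in effect, locating the first edge along the path that leaves the associate class of the $a_i$'s). It thus only needs to extract one useful edge from an existing path, never to control the whole vertex set. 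Second, you explicitly leave the cancellation step --- what you call the heart of the argument --- undone. The paper completes it by reading the adjacency of $a_1$ and $b_1$ as furnishing a $\tau$-atomic factorization $y=\gamma a_1 b_1 c_1\cdots c_m$; cancelling $a_1$ against $y=\lambda a_1\cdots a_n$ yields $\gamma b_1 c_1\cdots c_m=\lambda a_2\cdots a_n$, two $\tau$-atomic factorizations of $y/a_1$ that are distinct because no $b_j$ is associate to any $a_i$, with the second of length $n-1$, contradicting minimality of $n$. (You are right that the written definition of an edge only demands a $\tau$-factorization, so the paper is itself quietly reading the edge as witnessed by a $\tau$-atomic one; but your proposal stops at naming this difficulty rather than resolving it, so as it stands the key implication is not proved.)
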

\begin{proof} (1) $\Rightarrow$ (2) Let $x\in D^{\#}$.  Let $x=\lambda a_1 \cdots a_m$ be the unique $\tau$-atomic factorization of $x$ up to rearrangement and associates.  Because $\tau$ is associate preserving, one may adjust the unit factor in front if necessary, to assume without loss of generality $a_i\in \overline{Irr}(D)$ for each $i$.  After rearrangement, we may take the first $n$ $\tau$-factors to be distinct up to associate, and assume the last $m-n$ $\tau$-factors are repeated associates of $a_1, \ldots, a_n$.  Since these are the only irreducible $\tau$-divisors of $x$, $V(G_\tau(x))=\{a_1, \ldots, a_n\}$.  It is clear that given any two vertices, $a_i, a_j \in V(G_\tau(x))$, $a_ia_j \in  E(G_\tau(x))$, because $a_i \tau a_j$ and $a_i a_j \mid_\tau x$.  If there are repeated associates in the $\tau$-factorization, say $x= \lambda a_1^{e_1} \cdots a_n^{e_n}$ with $n \leq m$ is a factorization with $e_1 + e_2 + \ldots + e_n=m$.  Then if $e_i >1$, we have $a_i \tau a_i$ and $a_ia_i \mid_{\tau} x$, so we have $e_i - 1$ loops attached to $a_i$.  Hence $G_\tau (x)$ is a pseudoclique.
\\
\indent It is immediate from the definition of the reduced $\tau$-irreducible divisor graph that (2) $\Leftrightarrow$ (3) and $(6) \Leftrightarrow (7)$. Furthermore, $(2)\Leftrightarrow (4)$ and $(3)\Leftrightarrow (5)$ respectively since a connected graph is complete if and only if the diameter is $1$.
\\
\indent If $G_\tau(x)$ is complete it is certainly connected, so (2) $\Rightarrow$ (6).  It now suffices to show that (6) $\Rightarrow$ (1).  The following is a modification of the proof of \cite[Theorem 2.1]{Axtellidgd}.  Let $\mathcal{A}$ be the set of all non-zero, non-units which admit at least two distinct $\tau$-atomic factorizations.  We show $\mathcal{A}=\emptyset$.  Suppose otherwise and let $n:=min_{x\in \mathcal{A}}\{k \mid x=\lambda a_1 \cdots a_k \text{ is a } \tau\text{-atomic factorization}\}$.  Clearly, $n\geq 2$.  Let $y\in \mathcal{A}$ such that $y=\lambda a_1 \cdots a_n$.  Then there is a distinct $\tau$-atomic factorization $y=\mu b_1 \cdots b_t$ with $t \geq n$.  There is a path in $G_\tau(x)$ connecting $b_1$ and $a_1$, so without loss of generality we may as well have picked at $\tau$-atomic factorization such that $b_1$ and $a_1$ are actually adjacent in $G_\tau(x)$.  If $a_i$ and $b_j$ were associates for any $1 \leq i \leq n$, $1 \leq j \leq t$, then $\frac{y}{a_i}=\lambda'a_1 \cdots \widehat{a_i} \cdots a_n = \mu' b_1 \cdots \widehat{b_k} \cdots b_t$ provides two distinct $\tau$-atomic factorizations of a non-zero, non-unit but contradicts the minimality of $n$.  Since we chose $a_1$ and $b_1$ to be adjacent, we know there is a $\tau$-atomic factorization of the form $y=\gamma a_1b_1 c_1 \cdots c_m=\lambda a_1 \cdots a_n$.  But this again yields two distinct $\tau$-atomic factorizations of $\frac{y}{a_1}$:
$$ \frac{y}{a_1}=\gamma b_1 c_1 \cdots c_m=\lambda a_2 \cdots a_n$$
again contradicting the minimality of $n$.  Hence, $\mathcal{A}$ must be empty as desired, completing the proof.
\end{proof}

\begin{theorem}\label{thm: ffd} Let $D$ be a domain and $\tau$ a symmetric, refinable and associate preserving relation on $D^{\#}$.  If $D$ is $\tau$-atomic, then consider the following statements.
\\
(1) $G_\tau(x)$ is finite for every $x \in D^{\#}$.
\\
(2) $\overline{G_\tau(x)}$ is finite for every $x \in D^{\#}$.
\\
(3) $D$ is a $\tau$-irreducible divisor finite domain.
\\
(4) $D$ is a $\tau$-weak finite factorization domain. 
\\
(5) $D$ is a $\tau$-finite factorization domain.
\\
(6) For all $x\in D^{\#}$, degl$(a)< \infty$ for all $a \in V(G_\tau(x))$.
\\
(7) For all $x\in D^{\#}$, deg$(a)< \infty$ for all $a \in V(G_\tau(x))$.
\\
We have (1)-(5) are equivalent, (3) $\Rightarrow$ (6) $\Rightarrow$ (7), and if we assume further that $\tau$ is reflexive, then (7) $\Rightarrow$ (3) and all are equivalent.  
\end{theorem}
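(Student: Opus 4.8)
The plan is to prove the equivalence of (1)--(5) by a cycle that routes the two graph-theoretic statements through the factorization-theoretic ones, citing the implications already recorded in \cite{Frazier} for the purely factorization-theoretic equivalences and proving the two ``translation'' steps and one closing step by hand. Concretely, I would establish (1) $\Rightarrow$ (2) $\Rightarrow$ (3), then invoke \cite{Frazier} for (3) $\Leftrightarrow$ (4) $\Leftrightarrow$ (5), and finally prove (5) $\Rightarrow$ (1) directly to close the loop.

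The two translation steps are short. For (1) $\Rightarrow$ (2), note that $\overline{G_\tau(x)}$ is obtained from $G_\tau(x)$ by deleting loops, so if $G_\tau(x)$ is finite then so is $\overline{G_\tau(x)}$. For (2) $\Rightarrow$ (3), recall that by definition $V(G_\tau(x)) = V(\overline{G_\tau(x)})$ is exactly the set of $\tau$-atomic $\tau$-divisors of $x$ up to associate; hence $\overline{G_\tau(x)}$ finite forces finitely many such divisors, which is precisely the statement that $D$ is a $\tau$-idf domain. For (3) $\Leftrightarrow$ (4) $\Leftrightarrow$ (5) I would simply cite the diagram of implications from \cite{Frazier} reproduced in Section~2: under the standing hypotheses that $\tau$ is refinable and associate preserving (the conditions marked $\nabla$) and that $D$ is $\tau$-atomic, one has $\tau$-FFD $\Rightarrow$ $\tau$-WFFD $\Rightarrow$ $\tau$-atomic $\tau$-idf domain $\Rightarrow$ $\tau$-FFD, so these three are equivalent. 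Finally, for (5) $\Rightarrow$ (1): if $D$ is a $\tau$-FFD then $x$ has only finitely many $\tau$-factorizations up to rearrangement and associate, in particular only finitely many $\tau$-atomic ones; these involve only finitely many distinct atoms (so $V(G_\tau(x))$ is finite) and each atom occurs with bounded multiplicity across them (so each vertex carries finitely many loops), whence $G_\tau(x)$ is finite.

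For the remaining implications I would argue as follows. (3) $\Rightarrow$ (6): if $D$ is a $\tau$-idf domain then $V(G_\tau(x))$ is finite, so every vertex has finite $\deg$; moreover (3) $\Rightarrow$ (5) $\Rightarrow$ (1) already gives that $G_\tau(x)$ is finite, so each vertex carries only finitely many loops, and therefore degl$(a)$, being the sum of the loop count and $\deg(a)$, is finite. The implication (6) $\Rightarrow$ (7) is immediate since $\deg(a) \le$ degl$(a)$. The real content is (7) $\Rightarrow$ (3) under the extra hypothesis that $\tau$ is reflexive, which I would prove by contraposition: assuming $D$ is not a $\tau$-idf domain, some $x \in D^{\#}$ has infinitely many non-associate $\tau$-atomic $\tau$-divisors, i.e. $G_\tau(x)$ has infinitely many vertices, and I would use reflexivity together with refinability to exhibit a single vertex adjacent to infinitely many others, contradicting (7).

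I expect this last step to be the main obstacle. Local finiteness of the degree (statement (7)) does not by itself bound the number of vertices, so genuine input is needed to rule out an infinite, locally finite $\overline{G_\tau(x)}$; the hypothesis that $\tau$ is reflexive is exactly what is consumed here. The delicate point is that an atom $a$ with $a \mid_\tau x$ need not be $\tau$-related to, much less combinable with, the factors witnessing that another atom $\tau$-divides $x$, so producing a common $\tau$-factorization of $x$ containing a fixed atom together with arbitrarily many of the infinitely many $\tau$-atomic divisors is not automatic. Reflexivity --- which fails, for instance, for $\tau = \emptyset$, where every element is an isolated vertex and the implication is false --- is what should allow distinct atomic divisors to be inserted into $\tau$-factorizations sharing a common vertex; carrying out this construction carefully, while respecting the constraint that every $\tau$-factorization of $x$ has the same product, is where the proof must do its work.
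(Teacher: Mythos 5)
Your outline for (1)--(6) is essentially sound: the translations between graph finiteness and the idf property are the intended ones, and since the paper itself reproduces from \cite{Frazier} the cycle $\tau$-atomic $\tau$-idf $\Rightarrow$ $\tau$-FFD $\Rightarrow$ $\tau$-WFFD $\Rightarrow$ $\tau$-idf under the $\nabla$ hypotheses, citing that diagram for (3) $\Leftrightarrow$ (4) $\Leftrightarrow$ (5) is legitimate. The paper instead reproves the one nontrivial arrow (3) $\Rightarrow$ (5) self-containedly, adapting \cite[Theorem 5.1]{anderson90}: every $\tau$-factorization refines to one of the form $\lambda a_1^{s_1}\cdots a_n^{s_n}$ over the finitely many atoms, and a hypothetical infinite family of such factorizations yields, after passing to subsequences, two factorizations with $s_{1i}\leq s_{2i}$ for all $i$ and strict inequality somewhere, contradicting cancellation in a domain.

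The genuine gap is (7) $\Rightarrow$ (3). You have correctly diagnosed the obstacle --- one cannot in general merge the factorization witnessing $a_i\mid_\tau a$ with the one witnessing $a_j\mid_\tau a$ into a single $\tau$-factorization of $a$ --- but you do not resolve it, and the resolution is not to work in $G_\tau(a)$ at all. The missing idea is to pass to $a^2$. If $a$ has infinitely many non-associate $\tau$-atomic divisors $a_i$, witnessed by $\tau$-factorizations $a=\lambda_i a_i a_{i1}\cdots a_{in_i}$, then reflexivity gives $a\,\tau\,a$, so $a^2=a\cdot a$ is a $\tau$-factorization of $a^2$; refinability (with associate preservation) then lets you replace the two copies of $a$ by the witnesses for $a_i$ and $a_j$ respectively, giving the $\tau$-factorization $a^2=(\lambda_i\lambda_j)\,a_ia_j\,a_{i1}\cdots a_{in_i}a_{j1}\cdots a_{jn_j}$ in which $a_i$ and $a_j$ both appear. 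Hence every pair $a_i,a_j$ is adjacent in $G_\tau(a^2)$, and fixing $i_0$ while letting $j$ vary shows $\deg(a_{i_0})=\infty$ in $G_\tau(a^2)$; since (7) quantifies over all $x\in D^{\#}$, applying it to $x=a^2$ gives the contradiction. In particular there is no need to place a fixed atom in a common factorization with arbitrarily many others simultaneously --- two at a time suffices, one pair per factorization of $a^2$. One small correction: $\tau=\emptyset$ does not show the implication can fail without reflexivity, since there every element is a $\tau$-atom whose only $\tau$-divisors are its associates, so $D$ is trivially a $\tau$-idf domain and both (7) and (3) hold.
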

\begin{proof} We begin by showing (1)-(5) are equivalent.  $(1) \Leftrightarrow (2)$ and (3) $\Leftrightarrow$ (1) are immediate from the definitions.  (5) $\Rightarrow$ (4) is clear since every every $\tau$-divisor up to associate must appear as a $\tau$-factor in one of the finitely many $\tau$-factorizations.  (4) $\Rightarrow$ (3) every $\tau$-atomic divisor up to associate is certainly among the $\tau$-divisors. 
\\
\indent We need only prove (3) $\Rightarrow$ (5).  We modify the proof of \cite[Theorem 5.1]{anderson90}.  Let $a\in D^{\#}$.  Let $a_1, a_2, \ldots a_n$ be the collection of all non-associate $\tau$-irreducible divisors of $a$.  Because $D$ is $\tau$-atomic and $\tau$ is refinable and associate preserving, given any $\tau$-factorization, we could $\tau$-refine it into a $\tau$-atomic factorization.  In this way, every $\tau$-factorization corresponds to a $\tau$-factorization of the form 
$$a=\lambda a_1^{s_1} \cdots a_n^{s_n}$$
with $0 \leq s_i$.  If we can show that the set of factorizations of this form is finite, then every $\tau$-factorization occurs as some grouping of these $\tau$-atomic factors.  
\\
\indent We suppose first that for each $i$, there is a bound $N_i$ such that every $\tau$-refinement of a $\tau$-factorization as above has $0 \leq s_i \leq N_i$.  Then this set is finite, with $N_1 \cdot N_2 \cdots N_n$ elements in it.  So now we must have some $s_i$ which is unbounded, say $s_1$ is unbounded.  Then for each $k \geq 1$, we can write
$$x=\lambda_k a_1^{s_{k1}} \cdots a_n^{s_{kn}}$$
$\tau$-factorizations with $s_{k1} < s_{k2} < s_{k3} < \cdots$.  Suppose that in the set of factorizations, $\{ s_{ki} \}$ is bounded for $1 < i \leq n$.  Then there are only finitely many choices for $s_{k2}, \ldots, s_{kn}$, we must have $s_{k2}=s_{j2}, s_{k3}=s_{j3}, \ldots, s_{kn}=s_{jn}$ for some $j >k$.  But then we have 
$$x=\lambda_j a_1^{s_{j1}} \cdots a_n^{s_{jn}}=\lambda_k a_1^{s_{k1}} \cdots a_n^{s_{kn}}.$$
Since $D$ is a domain, we can cancel leaving $\lambda_j a_1^{s_{j1}}=\lambda_k a_1^{s_{k1}}$ with $s_{j1} > s_{k1}$, a contradiction.  
\\
\indent This means some set $\{ s_{ki} \}$ is unbounded for a fixed $i$ with $1 < i \leq n$.  Without loss of generality suppose it is $i=2$.  We continue in this manner until we get subsequences with $s_{11} < s_{21} < s_{31} < \cdots$ and $s_{21} < s_{22} < s_{32} < \cdots.$  But this means we have 
$$x=\lambda_1 a_1^{s_{11}} \cdots a_n^{s_{1n}} = \lambda_2 a_1^{s_{21}} \cdots a_n^{s_{2n}}$$
with $s_{1i} < s_{2i}$ for each $i$, a contradiction.  Thus this set must be finite, and the proof is complete.
\\
\indent (5) $\Rightarrow$ (6) $\Rightarrow$ (7) is immediate. 
\\
\indent We now suppose in addition that $\tau$ is reflexive and show (7) $\Rightarrow$ (3).  Suppose there is a $a\in D^{\#}$ such that $\{a_i \}_{i\in I}$ is an infinite collection of non-associate $\tau$-atomic divisors of $a$.  Suppose for each $i \in I$, that $a= \lambda_i a_i a_{i1} \cdots a_{in_i}$ are the given $\tau$-factorizations showing $a_i$ is a $\tau$-atomic divisor of $a$.  But then since $\tau$ is reflexive, we have $a \tau a$, so
$$a^2=a\cdot a = \left(\lambda_i a_i a_{i1} \cdots a_{in_i}\right) \left(\lambda_j a_j a_{j1} \cdots a_{jn_j}\right)$$ is a $\tau$-factorization for any choice of $i,j \in I$.  Because $\tau$-is refinable and associate preserving, we have
$$a^2= (\lambda_i\lambda_j) a_i a_j a_{i1} \cdots a_{in_i} a_{j1} \cdots a_{jn_j}$$
is a $\tau$-factorization, showing $a_i$ and $a_j$ are adjacent in $G_\tau (a^2)$, for any choice of $i,j\in I$.  Thus when we fix $i=i_0$, and let $j$ range over all possible choices of $a_j \in \{a_i \}_{i\in I}$, we see $a_{i_0}a_j \in E(G_\tau (a^2))$ for all $j \in I$, $j \neq i_0$.  Hence deg$(a_{i_0})=\infty$ in $G_\tau(a^2)$. 
\end{proof}

\begin{corollary} Let $D$ be a domain and $\tau$ a symmetric, refinable and associate preserving relation on $D^{\#}$.  If $D$ is $\tau$-atomic and if $G_\tau (x)$ (resp. $\overline{G_\tau(x)}$) is connected for every $x \in D^{\#}$, then $G_\tau (x)$ (resp. $\overline{G_\tau(x)}$) is a finite pseudoclique for every $x \in D^{\#}$.  Furthermore, $\overline{G_\tau(x)}=K_n$ for some finite $n$.
\end{corollary}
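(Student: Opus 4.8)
The plan is to assemble the corollary directly from Theorems~\ref{thm: ufd} and~\ref{thm: ffd} together with the implication diagram, since essentially all of the content has already been established. First I would observe that the two hypotheses under consideration---that $G_\tau(x)$ is connected for every $x \in D^{\#}$, or that $\overline{G_\tau(x)}$ is connected for every $x \in D^{\#}$---are precisely conditions (6) and (7) of Theorem~\ref{thm: ufd}, which are equivalent there to $D$ being a $\tau$-UFD (condition (1)). Thus either form of the hypothesis forces $D$ to be a $\tau$-UFD, and the implications (1) $\Rightarrow$ (2) and (1) $\Rightarrow$ (3) of that theorem then give that $G_\tau(x)$ is a pseudoclique and $\overline{G_\tau(x)}$ is a clique for every $x$.

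Next I would upgrade the $\tau$-UFD conclusion to finiteness. Because $\tau$ is refinable and associate preserving, the relationship diagram of Section~2 (the $\nabla$-labeled edge) yields $\tau$-UFD $\Rightarrow$ $\tau$-FFD, so $D$ is a $\tau$-FFD. The hypotheses of Theorem~\ref{thm: ffd}, namely that $D$ is $\tau$-atomic and $\tau$ is symmetric, refinable, and associate preserving, are exactly those in force here, so I may invoke the equivalence of its conditions (1), (2), and (5): being a $\tau$-FFD (condition (5)) is equivalent to $G_\tau(x)$ being finite (condition (1)) and to $\overline{G_\tau(x)}$ being finite (condition (2)) for every $x$.

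Combining the two steps finishes the argument: for each $x$, the graph $G_\tau(x)$ is simultaneously a pseudoclique and finite, hence a finite pseudoclique, while $\overline{G_\tau(x)}$ is simultaneously a clique and finite, hence a finite clique, which is $K_n$ for the finite number $n$ of its vertices.

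I do not anticipate a serious obstacle, as the proof is an assembly of prior results; the one point worth flagging is that connectedness alone does \emph{not} force finiteness, since an abstract pseudoclique may carry infinitely many mutually adjacent vertices. The finiteness genuinely requires the separate $\tau$-FFD input of Theorem~\ref{thm: ffd}, and that input in turn relies on refinability and the associate-preserving property (the $\nabla$ on the $\tau$-UFD $\Rightarrow$ $\tau$-FFD edge). Without these hypotheses the passage from the pseudoclique structure to a genuinely finite pseudoclique would break down, so I would be careful to cite them explicitly at the step where $\tau$-FFD is deduced.
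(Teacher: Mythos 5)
Your proposal is correct and follows essentially the same route as the paper: use Theorem \ref{thm: ufd} to get that connectedness forces $D$ to be a $\tau$-UFD (hence every $G_\tau(x)$ is a pseudoclique), pass through the implication diagram (using refinability and the associate-preserving hypothesis) to a finite factorization property, and then invoke the equivalences of Theorem \ref{thm: ffd} to conclude the graphs are finite. The only cosmetic difference is that the paper routes through the $\tau$-atomic $\tau$-idf condition (item (3) of Theorem \ref{thm: ffd}) while you stop at $\tau$-FFD (item (5)); since these are equivalent under the stated hypotheses, the arguments coincide.
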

\begin{proof} If $G_\tau (x)$ is connected for every $x\in D^{\#}$, then by Theorem \ref{thm: ufd} $D$ is a $\tau$-UFD which implies $D$ is a $\tau$-atomic $\tau$-irreducible divisor finite ring.  Theorem \ref{thm: ffd} shows $G_\tau(x)$ has a finite number of vertices.  Moreover, by Theorem \ref{thm: ufd}, each of these vertices is adjacent to every other vertex, i.e. $G_\tau(x)$ is a finite a pseudoclique.  The last statement is immediate.
\end{proof}
The following result is well known and was proven in \cite{Frazier}, but we include it since the $\tau$-irreducible divisor graph results yield a nice proof. 
\begin{corollary}Let $D$ be a domain and $\tau$ a symmetric, refinable and associate preserving relation on $D^{\#}$.  If $D$ is a $\tau$-atomic, $\tau$-irreducible divisor finite domain, then $D$ satisfies $\tau$-ACCP.
\end{corollary}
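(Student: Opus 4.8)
The plan is to recognize that this corollary is a purely formal consequence of two results already proved in this section, and that the hypotheses have been arranged precisely so that the output of one theorem becomes the input hypothesis of the other. The key observation is that the two standing assumptions on $D$ — that it is $\tau$-atomic and that it is a $\tau$-irreducible divisor finite domain — are together exactly statement (3) of Theorem \ref{thm: ffd}, under relation hypotheses ($\tau$ symmetric, refinable, associate preserving) that match those of that theorem verbatim.

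First I would invoke the implication (3) $\Rightarrow$ (6) from Theorem \ref{thm: ffd}. Since $D$ is $\tau$-atomic and $\tau$ is symmetric, refinable, and associate preserving, the theorem applies, and the hypothesis that $D$ is $\tau$-irreducible divisor finite yields the conclusion that for every $x \in D^{\#}$ and every vertex $a \in V(G_\tau(x))$ we have $\text{degl}(a) < \infty$. Next I would observe that this conclusion is, word for word, the defining hypothesis of Theorem \ref{thm: accp}: that $D$ is $\tau$-atomic and that $\text{degl}(a) < \infty$ for all vertices of all graphs $G_\tau(x)$. The relation hypotheses again coincide, so Theorem \ref{thm: accp} applies directly and delivers that $D$ satisfies $\tau$-ACCP, completing the argument.

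There is no genuine obstacle here; the entire content of the corollary already resides inside the graph-theoretic machinery of Theorems \ref{thm: ffd} and \ref{thm: accp}, and the proof is simply a matter of composing two implications. The only point demanding any care is verifying that the relation assumptions suffice for both theorems, which they do. In particular, the reflexivity of $\tau$ that Theorem \ref{thm: ffd} needs for the reverse implication (7) $\Rightarrow$ (3) plays no role in the forward chain (3) $\Rightarrow$ (6) we use here, so it is correctly absent from the corollary's hypotheses.
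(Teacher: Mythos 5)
Your proof is correct and is essentially identical to the paper's own argument: the paper likewise chains Theorem \ref{thm: ffd} (a $\tau$-atomic $\tau$-idf domain has degl$(a)<\infty$ for every vertex of every $G_\tau(x)$) into Theorem \ref{thm: accp} to conclude $\tau$-ACCP. Your added remark that reflexivity is not needed since only the forward implication (3) $\Rightarrow$ (6) is used is a correct and welcome clarification, but it does not change the route.
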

\begin{proof}Theorem \ref{thm: ffd} shows for an associate preserving and refinable $\tau$, a $\tau$-atomic $\tau$-idf domain has the property that for all $x\in D^{\#}$, $degl(a)<\infty$ for any vertex $a\in V(G_\tau(x)).$  By Theorem \ref{thm: accp}, this shows $D$ satisfies $\tau$-ACCP.
\end{proof}
\section*{Acknowledgment}
I would like to thank the referee for their helpful comments and careful reading which have improved the quality of this article.  This research was conducted as a research fellow under the supervision of Professor Daniel D. Anderson while at The University of Iowa.

\end{document}